\documentclass[11pt,onecolumn]{article}

\usepackage[a4paper,margin=1.15in]{geometry}
\usepackage{lmodern}
\usepackage{microtype}
\usepackage[dvipsnames]{xcolor}
\usepackage{amsmath}
\usepackage{amssymb}
\usepackage{amsthm}
\usepackage{aliascnt}
\usepackage{graphicx}
\usepackage{mathtools}
\usepackage{authblk}
\usepackage{enumitem}
\usepackage{titlesec}
\usepackage{fancyhdr}
\usepackage{hyperref}
\usepackage{cleveref}

\definecolor{linkblue}{HTML}{1F4E79}
\hypersetup{
    colorlinks=true,
    linkcolor=linkblue,
    citecolor=linkblue,
    urlcolor=linkblue
}

\titleformat{\section}{\large\bfseries}{\thesection}{0.75em}{}
\titleformat{\subsection}{\normalsize\bfseries}{\thesubsection}{0.75em}{}
\titlespacing*{\section}{0pt}{2.3ex plus 0.8ex minus 0.2ex}{1.1ex plus 0.2ex}
\titlespacing*{\subsection}{0pt}{1.8ex plus 0.6ex minus 0.2ex}{0.8ex plus 0.2ex}

\setlist[itemize]{leftmargin=2.2em,itemsep=0.25ex,topsep=0.6ex}
\usepackage{tikz-cd}

\theoremstyle{plain}
\newtheorem{theorem}{Theorem}[section]

\newaliascnt{corollary}{theorem}
\newtheorem{corollary}[corollary]{Corollary}
\aliascntresetthe{corollary}

\newaliascnt{proposition}{theorem}
\newtheorem{proposition}[proposition]{Proposition}
\aliascntresetthe{proposition}

\newaliascnt{lemma}{theorem}

\aliascntresetthe{lemma}

\theoremstyle{definition}
\newaliascnt{definition}{theorem}
\newtheorem{definition}[definition]{Definition}
\aliascntresetthe{definition}

\newaliascnt{example}{theorem}
\newtheorem{example}[example]{Example}
\aliascntresetthe{example}

\newaliascnt{remark}{theorem}
\newtheorem{remark}[remark]{Remark}
\aliascntresetthe{remark}

\crefname{theorem}{theorem}{theorems}
\Crefname{theorem}{Theorem}{Theorems}
\crefname{corollary}{corollary}{corollaries}
\Crefname{corollary}{Corollary}{Corollaries}
\crefname{proposition}{proposition}{propositions}
\Crefname{proposition}{Proposition}{Propositions}
\crefname{lemma}{lemma}{lemmas}
\Crefname{lemma}{Lemma}{Lemmas}
\crefname{definition}{definition}{definitions}
\Crefname{definition}{Definition}{Definitions}
\crefname{example}{example}{examples}
\Crefname{example}{Example}{Examples}
\crefname{remark}{remark}{remarks}
\Crefname{remark}{Remark}{Remarks}

\newcommand{\C}{\mathbb{C}}
\newcommand{\R}{\mathbb{R}}
\newcommand{\N}{\mathbb{N}}

\newcommand{\littletaller}{\mathchoice{\vphantom{\big|}}{}{}{}}
\newcommand\restr[2]{{
		\left.\kern-\nulldelimiterspace
		#1 
		\littletaller 
		\right|_{#2} 
}}

\title{On Subhomogeneous Operator Systems}
\author{Markus Dannem\"uller}
\author{Tim Netzer}
\affil{\small Department of Mathematics, University of Innsbruck, Austria}
\date{\today}

\begin{document}
\maketitle
\begin{abstract}
    We study subhomogeneity for finite-dimensional operator systems, and  collect and extend characterizations in terms of the $C^*$-envelope, $d$-maximality, complete positivity, dual $d$-minimality, and non-commutative boundary conditions. We then show that the dual of a subhomogeneous operator system, while not necessarily subhomogeneous itself,  is always a quotient of a subhomogeneous system. We complement these characterizations with examples and counterexamples, including minimal and maximal systems over certain polyhedral cones.
\end{abstract}

\section{Introduction}

Operator systems provide a framework for studying the self-adjoint parts of unital $C^*$-algebras and the completely positive maps between them.  In finite dimensions, this viewpoint connects several themes: matrix orders, matrix convexity, and the geometry of spectrahedra.  A central problem is to understand when the higher matrix levels of an operator system are already controlled by finitely many low levels.  This question is natural both from the perspective of operator algebra, where it is related to finite-dimensional and subhomogeneous representations, and from the perspective of convexity, where it asks how much of a matrix convex set is determined by bounded-size matrix points.

For $C^*$-algebras, subhomogeneity is a repesentation theoretic finiteness condition: a $C^*$-algebra is $d$-subhomogeneous if all irreducible representations have dimension at most $d$.  Equivalently, such an algebra embeds into matrices of size $d$ over a commutative $C^*$-algebra, and it is characterized by automatic complete positivity of $d$-positive maps.  In this paper we study the corresponding notion for finite-dimensional operator systems, which has recently been introduced \cite{kiri2024}. In our approach, the relevant replacement for an embedding of a $C^*$-algebra is a realization of the operator system inside ${\rm Mat}_d(A)$ with $A$ commutative; equivalently, one may ask whether the $C^*$-envelope is $d$-subhomogeneous. 

Our first main result is a collection and extension of equivalent characterizations of $d$-subhomogeneity.  For a finite-dimensional operator system $S$, the following conditions are all equivalent: $S$ admits a $d$-commutative realization, its $C^*$-envelope is $d$-subhomogeneous, $S$ is $d$-maximal, every $d$-positive map into $S$ is completely positive, the dual system $S^\vee$ is $d$-minimal, and a certain boundary-compression condition holds at all higher matrix levels.  These equivalences unify algebraic, order-theoretic, duality-theoretic, and geometric viewpoints. It also shows that our notion coincides with the one from \cite{kiri2024}.

We then study the behavior of duals.  Duality exchanges maximality and minimality, so one should not expect the dual of a subhomogeneous operator system to remain subhomogeneous.  Nevertheless, we prove a positive substitute: the dual of a $d$-subhomogeneous operator system is always a quotient of a $d$-subhomogeneous system.  This complements related projection results for finite-dimensional realizations of matrix convex sets, such as those in \cite{heltonklepmccullough}.

The final part of the paper develops examples.  For cones over cross-polytopes we show a sharp contrast: the interval and square cases have subhomogeneous  minimal operator systems, whereas dimension at least three gives minimal systems that are not subhomogeneous. 

These examples show that even for polyhedral cones, subhomogeneity depends delicately on the dimension and geometry of the cone and on the matrix convex structure it induces.

Throughout the paper, all operator systems are finite-dimensional. This ensures, among other things, that duals always exist. We expect, however, that many of the results can also be adapted to the infinite-dimensional setting.

\section{Preliminaries on Operator Systems}

We begin by recalling the basic notions and notation for operator systems and \(C^*\)-algebras, that will be used throughout. See  \cite{pau} for more details and proofs, for instance.

Throughout, all general vector spaces are finite-dimensional real vector spaces. In the operator algebra setting, one often works with complex vector spaces. However, these always carry an involution, all maps are $*$-linear, and positivity is determined by the self-adjoint part of the space. Thus everything can be reduced to the real case, and no generality is lost by this restriction.

A subset \(C \subseteq V\) of a vector space \(V\) is a \emph{convex cone} if \(C + C \subseteq C\) and \(\R_{\geqslant 0} \cdot C \subseteq C\). It is \emph{proper} if it is closed, pointed, and has non-empty interior, i.e.\ if \(C \cap -C = \lbrace 0 \rbrace\) and \(\operatorname{int}(C) \neq \emptyset\).
Denote by \({\rm Her}_s(\C) \subseteq {\rm Mat}_s(\C)\) the real vector space of hermitian matrices of size \(s\). 

An \emph{operator system} \(S\) on \(V\) is a collection of proper convex cones 
\[
    S_s \subseteq V \otimes {\rm Her}_s(\C)\eqqcolon {\rm Her}_s(V)
\] for \(s \in \N\), together with a distinguished interior point \(u \in S_1\), called the \emph{order unit}. The collection \((S_s)_{s \in \N}\), also called a \emph{matrix order}, is required to be closed under conjugation by scalar matrices: for all \(s, t \in \N\) and \(m \in {\rm Mat}_{s,t}(\C)\), we have \(m^* S_s m \subseteq S_t\).

Every $C^*$-algebra $A$ gives rise to an abstract operator system structure on the self-adjoint space $A_{\rm sa}$, by taking the cone of positive elements inside the self-adjoint part of $A\otimes{\rm Mat}_s(\C)={\rm Mat}_s(A)$ as the cone at level $s$. By slight abuse of notation, we denote this operator system structure again by $A$.

Given operator systems \(S\) and \(T\) on vector spaces \(V\) and \(W\), a linear map \(\phi \colon V \to W\) is called \emph{positive} if \(\phi(S_1) \subseteq T_1\). It is called \emph{\(s\)-positive} if \((\phi \otimes {\rm id}_{{\rm Her}_s(\C)})(S_s) \subseteq T_s\), and \emph{completely positive (cp)} if it is \(s\)-positive for every \(s \in \N\). If \(u_S\) and \(u_T\) are the order units of \(S\) and \(T\), then \(\phi\) is \emph{unital} if \(\phi(u_S) = u_T\). We denote the cone of completely positive maps from \(S\) to \(T\) by \({\rm CP}(S, T)\). The set of unital completely positive maps is denoted by \({\rm UCP}(S, T)\).

To simplify notation, we write
\[
\phi[a] \coloneqq (\phi \otimes {\rm id}_{{\rm Her}_s(\C)})(a)
\]
for \(a \in V \otimes {\rm Her}_s(\C)\). For \(b \in W \otimes {\rm Her}_t(\C)\), we write \(\phi^{-1}[b]\) for the preimage of \(B\) under \(\phi \otimes {\rm id}_{{\rm Her}_t(\C)}\). Likewise, \(\phi^{-1}[T]\) denotes the full level-wise preimage of the operator system \(T\).

For a given operator system \(S\) on $V$, its \emph{dual operator system} \(S^\vee\) is obtained by taking the dual cones level-wise. Since we are in the finite-dimensional setup, these will be proper convex cones and thus provide an operator system on the dual space $V'$.
Under the canonical isomorphism \[V' \otimes {\rm Her}_s(\C) \cong {\rm Lin}(V, {\rm Her}_s(\C))\] one has \(S^\vee_s = {\rm CP}(S, {\rm Mat}_s(\C))\). Biduality therefore gives the following separation criterion: \(a \in S_s\) if and only if $\phi(a)\geqslant 0$ for all \(\phi \in S^\vee_s\), if and only if \(\phi[a] \in {\rm Mat}_{s^2}(\C)_{+}\) for all \(\phi \in S^\vee_s\).

The Choi-Effros Theorem says that every abstract operator system is of the form $\phi^{-1}[A]$ for some $C^*$-algebra $A$ and unital completely positive map $\phi$.
It is therefore natural to ask for the smallest such  \(C^{*}\)-algebra. This is formalized by the \(C^{*}\)-envelope, whose existence was proven in \cite{hamana}. We recall the relevant definitions.

\begin{definition}
    Let \(S\) and \(T\) be operator systems. 

    ($i$) A map \(\phi \in {\rm UCP}(S,T)\) is called a \emph{realization of \(S\) on \(T\)} if \(\phi^{-1}[T] = S\).  In this case \((T, \phi)\) is also called an \emph{extension} of \(S\). If $T=A$ is a \(C^*\)-algebra, and \(\phi(S_1)\) generates \(A\) as a \(C^*\)-algebra, then  $(A,\phi$) is called  a \emph{\(C^{*}\)-extension} of \(S\).

    ($ii$) If \(T={\rm Mat}_d(A)\) for some commutative $C^*$-algebra $A$, we call $\phi$ a \emph{\(d\)-commutative realization}. For \(d=1\), we simply call it a \emph{commutative realization}. If \(A=\C\), we call it a \emph{\(d\)-dimensional realization}.

    ($iii$) 
    The \emph{\(C^{*}\)-envelope} of $S$ is the minimal \(C^{*}\)-extension of \(S\): it is a \(C^{*}\)-extension $$({C}^{*}_{\rm env}(S), \kappa)$$ with the following universal property. For every \(C^{*}\)-extension \((A, \phi)\) of \(S\), there is a surjective unital \(\ast\)-homomorphism \(\pi \colon  A \to {C}^{*}_{\rm env}(S)\) such that \(\kappa = \pi \circ \phi\).
\end{definition}

\begin{remark}\label{rem:fd}
   Both the class of finite-dimensional \(C^*\)-algebras and the class of commutative \(C^*\)-algebras is closed under quotients and \(C^*\)-subalgebras. Hence an operator system has a finite-dimensional or commutative realization if and only if its \(C^*\)-envelope is finite-dimensional or commutative, respectively.
\end{remark}

\begin{definition}\label{def:algsubhom}
    A \(C^*\)-algebra \(A\) is called \emph{\(d\)-subhomogeneous} if all of its irreducible representations have dimension at most \(d\). Equivalently, \(A\) is isomorphic to a \(C^*\)-subalgebra of \({\rm Mat}_d(B)\) for some commutative \(C^*\)-algebra \(B\). Further equivalent characterizations are that, for every \(C^*\)-algebra \(C\), every \(d\)-positive map \(A \to C\) is completely positive, and also that every \(d\)-positive map \(C \to A\) is completely positive; see \cite{bla,shu}.
\end{definition}

\begin{remark}\label{rem:dsubhom}
    The class of \(d\)-subhomogeneous \(C^*\)-algebras is also closed under quotients and \(C^*\)-subalgebras. Thus an operator system has a \(d\)-commutative realization if and only if its \(C^*\)-envelope is \(d\)-subhomogeneous.
\end{remark}

\section{Maximal Operator Systems}

Let \(C \subseteq V\) be a proper cone. There are, in general, many operator system structures on \(V\) whose first level is \(C\). Among them there is a smallest and a largest one with respect to level-wise inclusion\footnote{Some authors interchange this terminology, because the associated operator system norm varies in the opposite direction.}. The \emph{minimal operator system over \(C\)}, denoted \(C^{\rm min}\), is the smallest such structure, while the \emph{maximal operator system over \(C\)}, denoted \(C^{\rm max}\), is the largest. Explicit descriptions of these two constructions can be found, for instance, in \cite{fnt}.

The results in  \cite{xhabli} generalized these constructions by fixing not only the first level, but the first \(d\) levels of a given operator system. Thus one obtains the largest and smallest operator systems which agree with a prescribed operator system \(S\) up to level \(d\). We first record a useful characterization of the largest such structure.

\begin{proposition}\label{lem:dmax}
    Let \(S\) be an operator system on \(V\), and let \(d \in \N\) be fixed. For \(s \in \N\) and \(a \in V \otimes {\rm Her}_s(\C)\), the following are equivalent:
    \begin{itemize}
        \item[$(i)$] \(m^*am \in S_d\) for all \(m \in {\rm Mat}_{s,d}(\C)\).
        \item[$(ii)$] \(\phi[a] \in {\rm Mat}_{ds}(\C)_+\) for all \(\phi \in {\rm CP}(S,{\rm Mat}_d(\C))\).
        \item[$(iii)$] \(\phi[a] \in {\rm Mat}_{ds}(\C)_+\) for all \(\phi \in {\rm UCP}(S,{\rm Mat}_d(\C))\).
    \end{itemize}
    If \(s \leqslant d\), these conditions are further equivalent to \(a \in S_s\).
\end{proposition}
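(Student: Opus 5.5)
The plan is to prove the cycle $(i)\Rightarrow(ii)\Rightarrow(iii)\Rightarrow(i)$, and then handle the case $s\leqslant d$ separately, both via the duality $S^\vee_d={\rm CP}(S,{\rm Mat}_d(\C))$ and the separation criterion recalled in the preliminaries. The key identity is that for $\phi\in{\rm CP}(S,{\rm Mat}_d(\C))$, $a\in V\otimes{\rm Her}_s(\C)$ and vectors, positivity of $\phi[a]\in{\rm Mat}_{ds}(\C)$ is tested against vectors $\xi\in\C^d\otimes\C^s$. Writing $\xi=\sum_{j=1}^d e_j\otimes m_j$ with $m_j\in\C^s$ and collecting the $m_j$ as the columns of $m\in{\rm Mat}_{s,d}(\C)$, a direct computation should give
\[
\langle \phi[a]\xi,\xi\rangle=\langle \phi[m^*am]\,\omega,\omega\rangle ,
\]
where $\omega=\sum_j e_j\otimes e_j\in\C^d\otimes\C^d$ and $\phi[m^*am]\in{\rm Mat}_{d^2}(\C)$. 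This is the standard bookkeeping relating the canonical pairing of $S_d$ with $S_d^\vee$ to the quantity $\phi[\cdot]$. I expect that getting the indices and conjugations right here is the only real obstacle, so I would fix conventions first: the pairing $\langle\phi,b\rangle=\langle\phi[b]\omega,\omega\rangle$ for $b\in{\rm Her}_d(V)$, which is the identification behind $S^\vee_d={\rm CP}(S,{\rm Mat}_d(\C))$.

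$(i)\Rightarrow(ii)$: if $m^*am\in S_d$ for all $m$, then $\phi[m^*am]\geqslant0$ because $\phi$ is $d$-positive, so the right side of the identity is nonnegative for every $\xi$.

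$(ii)\Rightarrow(iii)$ is trivial.

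$(iii)\Rightarrow(i)$: first upgrade (iii) to (ii). Every $\phi\in{\rm CP}(S,{\rm Mat}_d(\C))$ is a limit of maps of the form $h^*\psi h$ with $\psi$ unital cp. Indeed, $\phi+\varepsilon\,\ell(\cdot)I$ has $\phi(u)+\varepsilon\ell(u)I$ invertible, where $\ell$ is a positive functional with $\ell(u)>0$, which exists since $S_1$ is proper. One can then normalize by $\phi(u)^{-1/2}$; alternatively, compress $\psi$ onto the range of $\phi(u)$, noting that its kernel kills $\phi$ by positivity. Conjugation by $h\otimes I_s$ preserves positivity of $\phi[a]$, and closedness of the psd cone handles the limit. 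Then, by the identity, $\langle\phi[m^*am]\omega,\omega\rangle\geqslant0$ for all $\phi\in S^\vee_d$. By biduality (the separation criterion) this gives $m^*am\in S_d$.

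For $s\leqslant d$: if $a\in S_s$, then $m^*am\in S_d$ by the matrix-order axiom, so (i) holds. Conversely, choose $m\in{\rm Mat}_{s,d}(\C)$ to be the coisometry $[I_s\ 0]$, that is, the embedding of $\C^s$ into the first coordinates of $\C^d$. Then $mm^*=I_s$, so $a=m(m^*am)m^*$ with $m^*am\in S_d$, and conjugating by $m^*\in{\rm Mat}_{d,s}(\C)$ puts $a$ in $S_s$ by the matrix-order axiom.
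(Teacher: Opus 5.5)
Your proposal is correct and follows essentially the same route as the paper. The identity $\xi^*\phi[a]\xi=\omega^*\phi[m^*am]\omega$ combined with biduality gives $(i)\Leftrightarrow(ii)$, and the case $s\leqslant d$ uses the same embed-and-compress argument. The only minor variation is how you upgrade $(iii)$ to $(ii)$: you perturb to $\phi+\varepsilon\ell(\cdot)I_d$, normalize by $\phi_\varepsilon(u)^{-1/2}$ (not $\phi(u)^{-1/2}$ as you wrote), and pass to the limit, whereas the paper splits off zero blocks and pads with normalized positive functionals. Your perturbation version is arguably cleaner.
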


\begin{proof}
    The implication \((i) \Rightarrow (ii)\) is standard. Take \(v=\sum_{i=1}^d e_i \otimes v_i \in \C^d \otimes \C^s\), set \(m=(v_1,\ldots, v_d) \in {\rm Mat}_{s,d}(\C)\), and let \(e=\sum_{i=1}^d e_i \otimes e_i \in \C^d \otimes \C^d\). Then
    \[
        v^*\phi[a]v=e^*\phi[m^*am]e.
    \]
    Since \(\phi\) is completely positive and \(m^*am \in S_d\) by assumption, the right-hand side is nonnegative.

    Conversely, assume \((ii)\). By biduality, it suffices to show that \(\phi[m^*am] \in {\rm Mat}_{d^2}(\C)_+\) for every \(\phi \in {\rm CP}(S,{\rm Mat}_d(\C))\). This follows from
    \[
        \phi[m^*am]=(I_d\otimes m)^*\phi[a](I_d\otimes m),
    \]
    together with \((ii)\).

    The implication \((ii) \Rightarrow (iii)\) is immediate. For \((iii) \Rightarrow (ii)\), use the standard normalization argument: after splitting off zero blocks and conjugating by an invertible matrix, any completely positive map into \({\rm Mat}_d(\C)\) becomes unital. If the target size decreases in this process, we enlarge it again by taking a direct sum with normalized positive functionals.

    Finally, suppose \(s \leqslant d\). Then the preceding conditions imply \(a \in S_s\). Indeed, one may either apply \((ii)\) and biduality directly, or use \((i)\) to embed \(a\) into level \(d\) and then compress back to level \(s\).
\end{proof}

We can now state the definition from \cite{xhabli}. There, the term \emph{super \(d\)-maximal} is used; for readability we omit the prefix \emph{super}.

\begin{definition}
    Let \(S\) be an operator system on \(V\). The \emph{\(d\)-maximal operator system} \(S^{d-{\rm max}}\) over \(S\) is defined level-wise by the equivalent conditions in \Cref{lem:dmax}. It agrees with \(S\) up to level \(d\), and is the largest operator system with this property. We call \(S\) \emph{\(d\)-maximal} if \(S = S^{d-{\rm max}}\).
\end{definition}

\begin{remark}
    The \(d\)-maximal structures form a decreasing chain
    \[
        S^{\max}=S^{1-\max}\supseteq S^{2-\max} \supseteq \cdots\supseteq \bigcap_d S^{d-\max}=S.
    \]
    In particular, if \(S\) is \(d\)-maximal, then it is also \((d+1)\)-maximal.
\end{remark}

Following \cite{xhabli}, there is also a dual notion of \(d\)-minimality: one now takes the smallest operator system which agrees with \(S\) up to level \(d\). In the finite-dimensional setting considered here, this can be described concisely.

\begin{definition}\label{def:dmin}
    Let \(S\) be an operator system on \(V\). The \emph{\(d\)-minimal operator system} \(S^{d-{\rm min}}\) is the matrix conic hull of \(S_d\). Thus its cone at level \(s\) consists of all finite sums
    \[
        \sum_i v_i^* x_i v_i,
    \]
    where \(x_i \in S_d\) and \(v_i \in {\rm Mat}_{d,s}(\C)\).
\end{definition}

\begin{remark} \label{thm:automatic_CP}
    There is also a more intrinsic formulation of \(d\)-minimality and \(d\)-maximality for an operator system $S$, also see  \cite{xhabli, kiri2024}.
    Indeed, \(S\) is \(d\)-maximal if and only if every \(d\)-positive map \(\phi\colon T \to S\) from an arbitrary operator system \(T\) is completely positive.
    Similarly, \(S\) is \(d\)-minimal if and only if every \(d\)-positive map \(\phi\colon S \to T\) into an arbitrary operator system \(T\) is completely positive. In both cases one can restrict to operator systems $T$ on finite-dimensional spaces.
\end{remark}

\section{Subhomogeneous Operator Systems}

We now extend the notion of subhomogeneity from \(C^*\)-algebras to operator systems by means of several equivalent properties. The equivalence of \((i)\), \((iii)\), \((iv)\), as well as part of the connection with \((v)\) in the following theorem, already appears in \cite{xhabli}. Conditions ($iv$) and ($v$) are also studied in \cite{kiri2024}, where an affine version of ($v$) actually serves as definition of subhomogeneity.

\begin{theorem} \label{thm:subhom_OS}
    Let \(S\) be an operator system on a finite-dimensional vector space \(V\). The following are equivalent:

    \begin{enumerate}
    \item[\((i)\)] \(S\) has a \(d\)-commutative realization.
    \item[\((ii)\)] \(C^*_{\rm env}(S)\) is \(d\)-subhomogeneous.
    \item[\((iii)\)] \(S\) is \(d\)-maximal.
    \item[\((iv)\)] For every operator system \(T\), every \(d\)-positive map \(\phi\colon T \to S\) is completely positive.
    
    \item[\((v)\)] The dual system \(S^\vee\) is \(d\)-minimal.
    \item[\((vi)\)] For every \(s>d\) and every \(x \in \partial S_s\), there exists an  isometry \(v \in {\rm Mat}_{s,d}(\C)\) such that \(v^*xv \in \partial S_d\).
    \end{enumerate}
\end{theorem}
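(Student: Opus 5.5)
We prove the cycle $(i)\Leftrightarrow(ii)$, $(i)\Rightarrow(iii)\Rightarrow(i)$, $(iii)\Leftrightarrow(iv)$, $(iii)\Leftrightarrow(v)$ and $(iii)\Leftrightarrow(vi)$.

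\textbf{$(i)\Leftrightarrow(ii)$ and $(iii)\Leftrightarrow(iv)$.} The first equivalence is \Cref{rem:dsubhom}. The second is the intrinsic description of $d$-maximality recorded in \Cref{thm:automatic_CP}.

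\textbf{$(i)\Rightarrow(iii)$.} Let $\phi\colon S\to {\rm Mat}_d(A)$ be a $d$-commutative realization. By Gelfand duality we may take $A=C(X)$. Let $a\in S^{d-\max}_s$; we must show $a\in S_s$, i.e.\ $\phi[a]\geqslant 0$ in ${\rm Mat}_s({\rm Mat}_d(C(X)))$. This can be tested pointwise: for each $x\in X$, evaluation composed with $\phi$ gives a map $\phi_x\in{\rm UCP}(S,{\rm Mat}_d(\C))$, and $\phi[a](x)=\phi_x[a]$. This is positive by \Cref{lem:dmax}$(iii)$.

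\textbf{$(iii)\Rightarrow(i)$.} Let $X={\rm UCP}(S,{\rm Mat}_d(\C))$, which is compact. Define $\Phi\colon S\to{\rm Mat}_d(C(X))$ by $\Phi(a)(\psi)=\psi(a)$. This map is unital and completely positive. \Cref{lem:dmax} says exactly that $\Phi^{-1}[{\rm Mat}_d(C(X))]=S^{d-\max}$, which equals $S$ by $(iii)$. So $\Phi$ is a $d$-commutative realization.

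\textbf{$(iii)\Leftrightarrow(v)$.} We show $(S^{d-\max})^\vee=(S^\vee)^{d-\min}$. Both sides are operator systems agreeing with $S^\vee$ up to level $d$, since duality is levelwise. By \Cref{lem:dmax}$(ii)$, $S^{d-\max}_s$ is the dual cone of the set of elements $(I\otimes w)^*\phi(I\otimes w)$, with $\phi\in S^\vee_d={\rm CP}(S,{\rm Mat}_d(\C))$ and $w$ ranging over scalar matrices. These elements are exactly the generators $v^*\phi v$ of the matrix conic hull of $S^\vee_d$, i.e.\ of $(S^\vee)^{d-\min}_s$. That hull is closed, being a finitely generated-type compactly based cone in finite dimensions. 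Biduality then gives $(S^{d-\max})^\vee=(S^\vee)^{d-\min}$. Hence $S=S^{d-\max}$ if and only if $S^\vee=(S^\vee)^{d-\min}$. The main care needed here is the index bookkeeping identifying the pairing $\langle\phi,a\rangle$ with the compressions $e^*\phi[m^*am]e$ from the proof of \Cref{lem:dmax}.

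\textbf{$(iii)\Rightarrow(vi)$.} Let $x\in\partial S_s$ with $s>d$. Then $x\in S_s$, and for every $\varepsilon>0$ we have $x-\varepsilon u\otimes I_s\notin S_s=S^{d-\max}_s$. So some $m$ with $m^*(x-\varepsilon u\otimes I)m\notin S_d$ exists, and by rescaling we may take $\|m\|\le1$. Passing to a limit as $\varepsilon\to0$ yields a contraction $m$ with $m^*xm\in S_d$ that lies on the boundary, in the sense that $m^*xm-\delta m^*m\otimes u\notin S_d$ for all $\delta>0$.

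The hard step is upgrading this contraction to an isometry. Write $m=v p$ with $v$ an isometry and $p\geqslant0$, using $s>d$ to find such a $v$ in ${\rm Mat}_{s,d}(\C)$. Then $m^*xm=p\,(v^*xv)\,p$, and $v^*xv\in S_d$ as a compression of $x$. If $v^*xv$ were interior, then $v^*xv\geqslant \delta' u\otimes I_d$ for some $\delta'>0$. This would give $m^*xm\geqslant\delta' p^2\otimes u=\delta' m^*m\otimes u$, contradicting the boundary property of $m^*xm$. Hence $v^*xv\in\partial S_d$.

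\textbf{$(vi)\Rightarrow(iii)$.} Suppose $a\in S^{d-\max}_s\setminus S_s$; necessarily $s>d$. Consider the segment $t\mapsto (1-t)(u\otimes I_s)+ta$, which starts in the interior of $S_s$ and ends outside it. It crosses $\partial S_s$ at some $x=(1-t)(u\otimes I)+ta$ with $t<1$. By $(vi)$ there is an isometry $v$ with $v^*xv=(1-t)(u\otimes I_d)+t\,v^*av$. Since $a\in S^{d-\max}_s$, we have $v^*av\in S_d$, so this element is interior in $S_d$. This contradicts $v^*xv\in\partial S_d$.
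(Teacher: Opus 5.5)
\paragraph{Gap in $(iii)\Rightarrow(vi)$.} The problem is the step where you let $\varepsilon\to0$ and assert that the limit contraction $m$ satisfies $m^*xm-\delta\,m^*m\otimes u\notin S_d$ for all $\delta>0$. This does not follow. The failure of $m_\varepsilon^*(x-\varepsilon u_s)m_\varepsilon$ to lie in $S_d$ can be carried by a direction in which a singular value of $m_\varepsilon$ tends to $0$, and that information disappears in the limit.

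Here is a concrete instance. Let $S=\C\oplus\C$, so $S_s$ consists of pairs of positive semidefinite matrices; $S$ is commutative, hence $2$-maximal. Take $d=2$, $s=3$ and $x=(\operatorname{diag}(0,1,1),I_3)\in\partial S_3$. Let $m_\varepsilon\in{\rm Mat}_{3,2}(\C)$ have columns $e_2$ and $\varepsilon e_1$, where $e_1,e_2,e_3$ is the standard basis of $\C^3$.
\begin{itemize}
\item These are admissible choices of norm one: the first entry of $m_\varepsilon^*(x-\varepsilon u_3)m_\varepsilon$ is $\operatorname{diag}(1-\varepsilon,-\varepsilon^3)$, which is not positive semidefinite.
\item But $m_\varepsilon\to m$ with columns $e_2$ and $0$, and $m^*xm-\delta\,m^*m\otimes u=(\operatorname{diag}(1-\delta,0),\operatorname{diag}(1-\delta,0))\in S_2$ for $0<\delta\leqslant 1$. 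So the claimed boundary property fails.
\item Since $m$ is rank-deficient, the isometry in $m=vp$ is not determined. Taking $p=\operatorname{diag}(1,0)$ and $v$ with columns $e_2,e_3$ is a valid polar decomposition, and then $v^*xv=(I_2,I_2)$ is interior.
\end{itemize}
So your argument, as written, can return an isometry that does not witness $(vi)$.

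\paragraph{Repair.} Reverse the order of the two operations; this is what the paper's compactness-of-isometries argument amounts to. Polar-decompose first, $m_\varepsilon=v_\varepsilon p_\varepsilon$. Then $m_\varepsilon^*(x-\varepsilon u_s)m_\varepsilon=p_\varepsilon(v_\varepsilon^*xv_\varepsilon-\varepsilon u_d)p_\varepsilon$. Since $S_d$ is invariant under conjugation by $p_\varepsilon$, this forces $v_\varepsilon^*xv_\varepsilon-\varepsilon u_d\notin S_d$. Pass to a convergent subsequence $v_\varepsilon\to v$ in the compact set of isometries. Then $v^*xv\in S_d$ is a limit of points outside $S_d$, so $v^*xv\in\partial S_d$.

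\paragraph{The other parts.} The rest of your proposal is correct, and two of your routes differ from the paper's.
\begin{itemize}
\item Your $(i)\Rightarrow(iii)$ by pointwise evaluation is more elementary than the paper's $(i)\Rightarrow(iv)$. The paper's route invokes automatic complete positivity of $d$-positive maps into $d$-subhomogeneous $C^*$-algebras.
\item Your identity $(S^{d-\max})^\vee=(S^\vee)^{d-\min}$ proves $(iii)\Leftrightarrow(v)$ directly at the level of cones. The paper instead dualizes maps and uses the intrinsic characterizations of $d$-maximality and $d$-minimality. Closedness of the matrix conic hull needs a short compactness argument rather than the phrase you give, but this is routine.
\end{itemize}
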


\begin{proof}
    By \Cref{rem:dsubhom}, items \((i)\) and \((ii)\) are equivalent. By \Cref{thm:automatic_CP}, items \((iii)\) and \((iv)\) are equivalent. 

    We now show \((i) \Rightarrow (iv)\). Let \(\psi\colon S\to {\rm Mat}_d(A)\) be a \(d\)-commutative realization of \(S\), and let $\phi\colon T\to S$ be $d$-positive. Then $\psi\circ\phi\colon T\to {\rm Mat}_d(A)$ is $d$-positive and thus completely positive, as explained in  \Cref{def:algsubhom}. Since $\psi$ is a realization, this implies that $\phi\colon T\to S$ is completely positive.

    We next prove ($iii$)$\Rightarrow$ ($i$), so  assume \(S=S^{d-{\rm max}}\). Let
    \[
        X={\rm UCP}(S,{\rm Mat}_d(\C)),
    \]
    with its natural compact Hausdorff topology; see \cite[Theorem 3.1]{xhabli}, here everything is even finite-dimensional. Define
    \[
        \psi\colon V \to {\rm Mat}_d({\rm C}(X)) \cong {\rm C}(X,{\rm Mat}_d(\C)), \qquad
        \psi(v)(\phi)=\phi(v).
    \]
    For \(a \in V \otimes {\rm Her}_t(\C)\), we have \(\psi[a](\phi)=\phi[a]\). Hence, by \Cref{lem:dmax},
    \[
        a \in S^{d-{\rm max}}_t
        \quad\Leftrightarrow\quad
        \psi[a] \in {\rm Mat}_{dt}({\rm C}(X))_+.
    \]
    Since \(S=S^{d-{\rm max}}\), this says exactly that  \(\psi\) is a \(d\)-commutative realization of \(S\).

    Next, \((iv) \Leftrightarrow (v)\) follows by duality. Indeed, assume \((iv)\), and let \(T\) be an operator system on a finite-dimensional space and \(\phi\colon S^\vee \to T\) a \(d\)-positive map. Its dual \(\phi'\colon T^\vee \to S\) is again \(d\)-positive, hence completely positive by \((iv)\). Dualizing again gives \(\phi \in {\rm CP}(S^\vee,T)\). By \Cref{thm:automatic_CP}, this is equivalent to \(S^\vee\) being \(d\)-minimal. The converse is the same argument in reverse.

    It remains to show \((iii) \Leftrightarrow (vi)\). 
    Assume first that \(S\) is \(d\)-maximal, and let \(x \in \partial S_s\).  If no compression \(v^*xv\), with \(v^*v=I_d\), lies in \(\partial S_d\), then all such compressions lie in \(\operatorname{int}(S_d)\). By compactness of the set of isometries, there is an \(\varepsilon>0\) such that
    \[
         v^*xv-\varepsilon u_d=v^*(x-\varepsilon u_s)v \in S_d
    \]
    for every isometry \(v \in {\rm Mat}_{s,d}(\C)\). Here, $u_s=u_1\otimes I_s$ denotes  the order unit. For an arbitrary \(w \in {\rm Mat}_{s,d}(\C)\), take the polar decomposition  \(w=vp\), with \(v\) an isometry and \(p\) psd. Then
    \[
       w^*(x-\epsilon u_s)w=p^*v^*(x-\epsilon u_s)vp \in S_d.
    \]
    From $d$-maximality of $S$ we obtain \(x-\epsilon u_s \in S_s\), contradicting \(x \in \partial S_s\). Hence some isometry \(v\) satisfies \(v^*xv \in \partial S_d\).

    Conversely, assume \((vi)\). If \(x \notin S_s\), then for some \(\epsilon>0\) the element \(x+\epsilon u_s\) lies in \(\partial S_s\). By \((vi)\), there is an isometry \(v\) such that
    \[
        v^*(x+\epsilon u_s)v=v^*xv+\epsilon u_d \in \partial S_d.
    \]
    This implies \(v^*xv \notin S_d\). So $S$ is indeed $d$-maximal.
\end{proof}

\begin{definition}
    We call an operator system $S$ {\it $d$-subhomogeneous}, if it fulfills the equivalent conditions from \Cref{thm:subhom_OS}.
\end{definition}

\begin{remark}
    \Cref{thm:subhom_OS} has several immediate consequences. First, the notion of $d$-subhomogeneity for operator system generalizes the one for $C^*$-algebras. Second, \(d\)-subhomogeneity passes from extension to subsystems: if \(T \hookrightarrow S\) is a realization and \(\psi\) is a \(d\)-commutative realization of \(S\), then \(\restr{\psi}{T}\) is a \(d\)-commutative realization of \(T\). Finally, item \((iv)\) is the analogue of automatic complete positivity for maps \emph{into} a \(d\)-subhomogeneous \(C^*\)-algebra. In contrast to $C^*$-algebras, \(d\)-positive maps \emph{out of} a $d$-subhomogeneous operator system \(S\) need not be completely positive in general.
\end{remark}

\begin{remark}
    For \emph{finite-dimensional realizations} of operator systems, there exist analogs of several items in \Cref{thm:subhom_OS} above. Actually ($ii$) becomes finite-dimensionality of the envelope (by \Cref{rem:fd}), ($v$) states that $S^\vee$ is generated by a {\it single} element from $S_d^\vee$ (see for example \cite[Theorem 2.9]{bergernetzer}), and ($vi$) has a stronger and more technical formulation in \cite[Theorem 2.3]{fnt}.
\end{remark}

The following corollaries are immediate.

\begin{corollary}
\label{cor:max}
    Let \(S\) be an operator system. The following are equivalent:
    \begin{itemize}
        \item[$(i)$] \(S\) has a commutative realization.
        \item[$(ii)$] \(C^*_{\rm env}(S)\) is commutative.
        \item[$(iii)$] \(S\) is the maximal operator system over \(S_1\).
    \end{itemize}
\end{corollary}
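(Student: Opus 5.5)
This is the case \(d=1\) of \Cref{thm:subhom_OS}, so the plan is to specialize each item and check that it turns into the corresponding item of the corollary.

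For \((i)\), a \(1\)-commutative realization is by definition a realization into \({\rm Mat}_1(A)=A\) with \(A\) commutative, that is, a commutative realization. So \((i)\) here is literally item \((i)\) of \Cref{thm:subhom_OS} with \(d=1\).

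For \((ii)\), a \(C^*\)-algebra is \(1\)-subhomogeneous exactly when all its irreducible representations are one-dimensional. This holds if and only if the algebra is commutative. One direction: irreducible representations of a commutative \(C^*\)-algebra are characters. The other direction: irreducible representations separate points, so if all of them are one-dimensional, every commutator is mapped to zero and hence vanishes. Equivalently, one can use the embedding characterization in \Cref{def:algsubhom}, since a subalgebra of \({\rm Mat}_1(B)=B\) with \(B\) commutative is commutative. Alternatively, \((i)\Leftrightarrow(ii)\) follows directly from \Cref{rem:fd}.

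For \((iii)\), we have \(S^{1-\max}=S^{\max}\) with respect to the cone \(S_1\), as recorded in the remark after the definition of \(d\)-maximality. This is because \(S^{1-\max}\) is the largest operator system agreeing with \(S\) at level \(1\), which is precisely the maximal system over \(S_1\). Hence \(S\) being \(1\)-maximal means exactly that \(S=(S_1)^{\max}\).

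There is no real obstacle here. The only point needing a moment's care is identifying \(1\)-subhomogeneity of \(C^*\)-algebras with commutativity, and \Cref{rem:fd} already covers this.
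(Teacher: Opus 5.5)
Your proposal is correct and matches the paper, which lists this corollary as an immediate consequence of \Cref{thm:subhom_OS} with $d=1$. It uses $S^{1-\max}=S^{\max}$ for $(iii)$, and either the identification of $1$-subhomogeneous $C^*$-algebras with commutative ones or directly \Cref{rem:fd} for $(i)\Leftrightarrow(ii)$; the details you give for these identifications are sound.
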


\begin{corollary}
    If an operator system has a \(d\)-dimensional realization, then it is \(d\)-maximal.
\end{corollary}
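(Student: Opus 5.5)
The plan is to reduce the statement to \Cref{thm:subhom_OS}, specifically to the implication $(i)\Rightarrow(iii)$. Suppose $\phi\colon S\to {\rm Mat}_d(\C)$ is a $d$-dimensional realization. Then ${\rm Mat}_d(\C)$ is of the form ${\rm Mat}_d(A)$ with $A=\C$, which is a commutative $C^*$-algebra. So $\phi$ is in particular a $d$-commutative realization in the sense of the definition, and condition $(i)$ of \Cref{thm:subhom_OS} holds. The theorem then gives $(iii)$, that is, $S=S^{d-\max}$, which is exactly $d$-maximality.

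The only point that needs care is the reading of \emph{$d$-dimensional realization}. If it is meant as a realization on ${\rm Mat}_k(\C)$ for some $k\leqslant d$, I would reduce to the case above in one of two ways.

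\emph{Via \Cref{thm:subhom_OS}.} Embed ${\rm Mat}_k(\C)$ into ${\rm Mat}_d(\C)$ as the corner $x\mapsto x\oplus 0$ and add a state on the complementary corner to keep the map unital. Concretely, fix any state $f$ of $S$ (for example $f=\psi\circ\phi$ for a state $\psi$ of ${\rm Mat}_k(\C)$), and set $\tilde\phi=\phi\oplus f\otimes I_{d-k}$. This map is unital and completely positive. It is still a realization: if $\tilde\phi[a]\geqslant 0$, then compressing to the first block gives $\phi[a]\geqslant 0$, hence $a\in S$.

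\emph{Directly.} Alternatively, use \Cref{rem:dsubhom}: ${\rm Mat}_k(\C)$ is $d$-subhomogeneous whenever $k\leqslant d$, so the $C^*$-envelope, being a quotient of a $C^*$-subalgebra of ${\rm Mat}_k(\C)$, is $d$-subhomogeneous. This gives condition $(ii)$ of \Cref{thm:subhom_OS} directly.

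I expect no real obstacle here; the only thing to check is the padding step, which is routine as shown above. For $d$-maximality I would use the chain $S^{\max}\supseteq S^{2-\max}\supseteq\cdots$ only to note that the conclusion is consistent with $(d+1)$-maximality as well.
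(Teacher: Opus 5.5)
Your argument is correct and is exactly the paper's: a $d$-dimensional realization is a realization into ${\rm Mat}_d(A)$ with $A=\C$ commutative, hence a $d$-commutative realization, and the implication $(i)\Rightarrow(iii)$ of \Cref{thm:subhom_OS} gives $d$-maximality; this is why the paper calls the corollary immediate. The padding discussion for ${\rm Mat}_k(\C)$ with $k\leqslant d$ is unnecessary, since the paper's definition fixes the target ${\rm Mat}_d(\C)$, but what you wrote there is also correct.
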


\begin{corollary}
\label{cor:comb}
    For an operator system \(S\), the following are equivalent:
    \begin{itemize}
        \item[$(i)$] \(S\) has both a finite-dimensional realization and a commutative realization, possibly different ones.
        \item[$(ii)$] \(C^*_{\rm env}(S)\) is finite dimensional and commutative.
        \item[$(iii)$] \(S_1\) is polyhedral and \(S\) is the maximal operator system over \(S_1\).
    \end{itemize}
\end{corollary}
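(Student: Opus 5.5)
The plan is to establish the cycle $(i)\Rightarrow(ii)$, $(ii)\Rightarrow(iii)$, $(iii)\Rightarrow(i)$, combining \Cref{rem:fd} with \Cref{cor:max}.

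For $(i)\Rightarrow(ii)$: by \Cref{rem:fd}, a finite-dimensional realization makes $C^*_{\rm env}(S)$ finite-dimensional, and a commutative realization makes it commutative. Both properties concern the same algebra, so $C^*_{\rm env}(S)$ is both.

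For $(ii)\Rightarrow(iii)$: the envelope $C^*_{\rm env}(S)$ is commutative, so \Cref{cor:max} shows that $S$ is the maximal system over $S_1$. The envelope is also finite-dimensional and commutative, hence $C^*_{\rm env}(S)\cong\C^n$. The realization $\kappa\colon S\to\C^n$ then gives
\[
S_1=\kappa^{-1}(\R^n_{\geqslant 0})=\{a\in V:\ \kappa_i(a)\geqslant 0,\ i=1,\dots,n\},
\]
where $\kappa_i$ are the coordinate functionals. This is a finite intersection of half-spaces, so $S_1$ is polyhedral.

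For $(iii)\Rightarrow(i)$: \Cref{cor:max} already gives a commutative realization. For the finite-dimensional one, I would write the polyhedral proper cone as
\[
S_1=\{a:\ \ell_1(a)\geqslant 0,\dots,\ell_n(a)\geqslant 0\},
\]
where $\ell_1,\dots,\ell_n$ span $S_1^\vee$. Define the unital map
\[
\psi=(\ell_1,\dots,\ell_n)\colon V\to\C^n,
\]
normalized so that $\ell_i(u)=1$, which is possible because $u$ is interior and hence each $\ell_i(u)>0$. Its level-$s$ preimage $\psi^{-1}[\C^n]$ is an operator system on $V$ with first level exactly $S_1$. Since $\C^n$ is commutative, \Cref{cor:max} identifies this system as the maximal system over $S_1$, which is $S$. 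Hence $\psi$ is a finite-dimensional realization.

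The only step needing care is this last identification. It follows from $(i)\Leftrightarrow(iii)$ of \Cref{cor:max} applied to the preimage system, so nothing further is required.
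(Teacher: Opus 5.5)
Your proof is correct, but it takes a more self-contained route than the paper. The paper handles $(i)\Leftrightarrow(ii)$ via the envelope and then simply cites \cite{fnt} for the equivalence with $(iii)$: a maximal system over a cone has a finite-dimensional realization if and only if the cone is polyhedral, and the realization can then be chosen commutative. You instead use only \Cref{rem:fd} and \Cref{cor:max}.

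For $(ii)\Rightarrow(iii)$, your key observation is that both properties in $(i)$ pass to the same algebra $C^*_{\rm env}(S)$. That algebra is therefore $\C^n$, and polyhedrality of $S_1=\kappa^{-1}(\R^n_{\geqslant 0})$ is immediate. For $(iii)\Rightarrow(i)$, you build the realization from the facet functionals and identify the preimage system with $S_1^{\max}$ through \Cref{cor:max}.

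This avoids the genuinely nontrivial direction of \cite{fnt}, namely that a maximal system over a non-polyhedral cone has no finite-dimensional realization at all, commutative or not. The corollary does not need that fact. As a byproduct, you recover the description of $C^{\max}$ for polyhedral $C$ as the system obtained by imposing the same facet inequalities at every level. What the paper's citation buys is a stronger statement: in $(i)$ it would already suffice to assume that $S$ is maximal and has some finite-dimensional realization.

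Two small points should be made explicit. First, $\psi^{-1}[\C^n]$ is an operator system, in particular pointed at every level, because $\psi$ is injective. Injectivity holds because the $\ell_i$ generate $S_1^\vee$, which is full-dimensional since $S_1$ is pointed. Second, discard any $\ell_i=0$ before normalizing to $\ell_i(u)=1$.
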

\begin{proof}
    The equivalence of \((i)\) and \((ii)\) follows from \Cref{thm:subhom_OS}. The equivalence with \((iii)\) follows from \cite{fnt}: maximal operator systems over a cone have a finite-dimensional realization if and only if the cone is polyhedral, and in that case the realization can be chosen commutative.
\end{proof}

\section{Duals of Subhomogeneous Operator Systems}

\Cref{thm:subhom_OS} shows, among other things, that the dual of a \(d\)-subhomo\-geneous operator system is \(d\)-minimal, and thus will in general not be subhomogeneous. However, we can prove that duals of subhomogeneous systems are {\it projections/quotients} of subhomogeneous systems. This complements the same result for systems with a finite-dimensional realization, obtained in \cite{heltonklepmccullough}.

\begin{theorem}
    Let \(S\) be a \(d\)-subhomogeneous operator system on a finite-dimensional vector space \(V\). Then \(S^\vee\) is the quotient (i.e.\ the closed linear image) of a \(d\)-subhomogeneous operator system.
\end{theorem}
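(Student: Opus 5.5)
The idea is to make the $d$-minimality of $S^\vee$ (\Cref{thm:subhom_OS}$(v)$) explicit and to realize it as a linear image. By \Cref{def:dmin}, every element of $S^\vee_s$ is a finite sum $\sum_i v_i^*\varphi_i v_i$ with $\varphi_i\in S^\vee_d={\rm CP}(S,{\rm Mat}_d(\C))$ and $v_i\in{\rm Mat}_{d,s}(\C)$. The entries of $v^*\varphi v$ are bilinear in $\varphi$ and in the rank-one positive matrix $\mathrm{vec}(v)\mathrm{vec}(v)^*\in({\rm Her}_d(\C)\otimes{\rm Her}_s(\C))_+$. They are obtained from $\varphi\otimes \mathrm{vec}(v)\mathrm{vec}(v)^*$ by contracting the two ${\rm Her}_d(\C)$ factors via the pairing $(A,B)\mapsto {\rm tr}(AB^{T})$.

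So I would work on the space $W=V'\otimes{\rm Her}_d(\C)\otimes{\rm Her}_d(\C)$ with the contraction map
\[
\pi\colon W\to V',\qquad \pi(f\otimes A\otimes B)={\rm tr}(AB^T)\,f .
\]
On $W$ I define the operator system $T$ with cones
\[
T_s=\Bigl\{\textstyle\sum_i \varphi_i\otimes P_i \;:\; \varphi_i\in S^\vee_d,\ P_i\in\bigl({\rm Her}_d(\C)\otimes{\rm Her}_s(\C)\bigr)_+\Bigr\},
\]
where $S^\vee_d$ is viewed as a cone in $V'\otimes{\rm Her}_d(\C)$ and the second ${\rm Her}_d$ factor of $W$ is grouped with the matrix level ${\rm Her}_s$. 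This is the maximal tensor product of the cone $S^\vee_d$, equipped with its maximal structure, with ${\rm Mat}_d(\C)$. A unit is $u'\otimes I_d$ for any interior point $u'$ of $S^\vee_d$, and closedness, pointedness and compatibility with scalar conjugation should be routine.

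I would then verify two things. First, $\pi[T_s]=S^\vee_s$ for every $s$. The inclusion $\supseteq$ follows from the formula above applied to $P=\mathrm{vec}(v)\mathrm{vec}(v)^*$. For $\subseteq$, decompose $P$ into rank-one positives; each contracted term is then of the form $v^*\varphi v$, so it lies in $S^\vee_s$. Second, $T$ is $d$-subhomogeneous. Here I would use \Cref{cor:max}: the maximal system over the proper cone $S^\vee_d$ has a commutative realization, namely evaluation at the points of a compact base $K$ of the dual cone $S_d$. Tensoring this realization with ${\rm id}_{{\rm Mat}_d}$ should give a map $W\to {\rm Mat}_d({\rm C}(K))$, and I would check that it is a $d$-commutative realization of $T$.

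The main obstacle is this last verification: that the maximal tensor structure described by $T_s$ coincides with the pullback of positivity in ${\rm Mat}_d({\rm C}(K))$. Positivity of $\varphi\otimes P$ under the realization is clear, so the content lies in the converse inclusion. I would try to prove it via \Cref{thm:subhom_OS}$(iii)$, showing that $T$ is $d$-maximal directly from the description of $T_s$. If that fails, I would instead redefine $T$ as the pullback of ${\rm Mat}_d({\rm C}(K))$, so that it is $d$-subhomogeneous by construction. The real work then becomes showing that $\pi$ still maps this possibly larger cone into $S^\vee_s$, using that $S^\vee$ is dual to the $d$-maximal system $S$ together with \Cref{lem:dmax}. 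Finally, the quotient is a closed linear image, since the images of the proper cones $T_s$ are closed.
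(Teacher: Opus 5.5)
Your verification that $\pi[T_s]=S^\vee_s$ is correct. The claim that $T$ is $d$-subhomogeneous fails, and the failure comes from a clash of conventions. Your cone $T_s=\{\sum_i\varphi_i\otimes P_i\}$ is the \emph{smallest} structure over $S^\vee_d$, read at level $ds$. In this paper's convention that is $(S^\vee_d)^{\min}$; calling it ``maximal'' is right only in the opposite convention mentioned in the footnote. By contrast, \Cref{cor:max}, i.e.\ evaluation at a base $K$ of $S_d$, realizes the \emph{largest} structure, namely $T'_s=\{a: k[a]\succeq 0 \text{ for all } k\in S_d\}=(S^\vee_d)^{\max}_{ds}$. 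The two structures coincide only for simplex cones.

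Neither of your choices works, and $d=1$ already shows this. There $W=V'$ and $\pi=\mathrm{id}$, so any $T$ on $W$ with $\pi[T]=S^\vee$ must be $S^\vee$ itself. Take $S=Q^{\max}$ with $n\geqslant 3$, which is $1$-subhomogeneous by \Cref{cor:max}. Your $T$ is then $S^\vee=C^{\min}$, which is not $d$-subhomogeneous for any $d$ by \Cref{thm:cube_d_min}. Your fallback $T'=C^{\max}$ is $1$-subhomogeneous, but it is strictly larger than $C^{\min}$ at some level, so $\pi[T']\not\subseteq S^\vee$. No argument via \Cref{lem:dmax} can repair this. As long as the non-simplicial cone $S^\vee_d$ is the first tensor factor of the domain, its minimal and maximal amplifications differ, and you cannot have both properties at once.

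The missing idea is to replace $S^\vee_d$ by a commutative cone that maps onto it. The paper takes $X=\mathrm{UCP}(S,\mathrm{Mat}_d(\C))$ and the $d$-commutative realization $\psi\colon S\to\mathrm{Mat}_d(\mathrm{C}(X))$ from the proof of \Cref{thm:subhom_OS}. It uses $\mathrm{Mat}_d(\mathrm{C}(X))$ itself as the $d$-subhomogeneous domain.

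Your contraction reappears there as the pointwise map $\pi_x=(\operatorname{ev}_x\circ\psi)'\colon\mathrm{Mat}_d(\C)\to V'$, which is exactly contraction against $x\in S^\vee_d$. These maps are averaged: $\pi(a)=\int_X\pi_x(a(x))\,\mathrm{d}\lambda(x)$ with $\lambda$ of full support.

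Complete positivity into $S^\vee$ then follows from $\pi[a](b)=\int_X\mathrm{tr}\bigl(a(x)\psi[b](x)\bigr)\,\mathrm{d}\lambda(x)\geqslant 0$. Density of the image is obtained by separation. If $\pi[a](b)\geqslant 0$ for all psd $a$, then full support of $\lambda$ and self-duality of the psd cone give $\psi[b]\geqslant 0$, hence $b\in S_s$ because $\psi$ is a realization.

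The point is that the non-simplicial geometry of $S^\vee_d$ is moved out of the domain's cone and into the linear map $\pi$. In your construction it stays in the domain, and that is exactly what blocks both requirements at once.
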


\begin{proof}
    Since \(S\) is \(d\)-subhomogeneous, we have a \(d\)-commutative realization \[\psi\colon S \hookrightarrow \mathrm{Mat}_d(A).\]  We will  show that \(\mathrm{Mat}_d(A)\) itself  projects onto \(S^\vee\).

    In the proof of \Cref{thm:subhom_OS} we have seen that we can choose $A={\rm C}(X)$ where $X={\rm UCP}(S,{\rm Mat}_d(\C))$ is a compact convex subset of a Euclidean space. 
    For \(x \in X\) and \(\operatorname{ev}_x\) the evaluation map at \(x\), we consider the map \[\pi_x\coloneqq (\operatorname{ev}_x \circ \psi)'\colon \mathrm{Mat}_d(\C) \to V'\] where we have identified ${\rm Mat}_d(\C)$ with its dual space, via the usual trace inner product. 

    Let $\lambda$ be any finite Borel measure with full support on $X$ (take for example the Lebesgue measure). We use it to define the following map: 
    $$\pi\colon \mathrm{Mat}_d({\rm C}(X)) \to V';\quad a \mapsto \int_X \pi_x(a(x)) {\rm d} \lambda(x).$$
    We first check that this is well-defined. For $a\in{\rm Mat}_d({\rm C}(X))$ and $x\in X$ we have 
    \[
        \pi_x(a(x)) = [v\mapsto {\rm tr}(a(x)^*\psi(v)(x))],
    \] which depends continuously on $x$. So the integrand is continuous on \(X\) and hence the integral exists. Thus $\pi$ is a well-defined $*$-linear map.

    We now check that \(\pi\) is completely positive as a map to \(S^\vee\). For $$a\in\left({\rm Mat}_d({\rm C}(X))\otimes{\rm Mat}_s(\C)\right)_+={\rm Mat}_{ds}({\rm C}(X))_+$$ and $b\in S_s$ we have $$\pi[a](b)=\int_X {\rm tr}(\underbrace{a(x)}_{\rm psd}\cdot\underbrace{\psi[b](x)}_{\rm psd}){\rm d}\lambda(x)\geqslant 0,$$ since $\psi$ is completely positive. This  implies $\pi[a]\in S^\vee_s,$ what was to be shown.

    We finally need to show that \(\pi\) is surjective onto \(S^\vee\), up to closure.  If $c\in V'\otimes{\rm Her}_s(\C)$ does not belong  to the closure of $\pi[{\rm Mat}_{ds}({\rm C}(X))_+],$ there exists some $b\in V\otimes{\rm Her}_s(\C)$ with $$c(b)< 0\ \mbox { and }\ \pi[a](b)\geqslant 0 \mbox{ for all } a\in {\rm Mat}_{ds}({\rm C}(X)_+.$$
    The second condition implies that $\psi[b]$ is positive, which is easily shown using that $\lambda$ has full support on $X$, and classical self-duality of the psd cone. Since  $\psi$ is a realization of $S,$ this further implies $b\in S_s$. This finally yields $c\notin S_s^\vee,$ which  finishes the proof.
\end{proof}

\begin{remark}
    In the case that $S$ has a $d$-dimensional realization, the above proof shows that $S^\vee$ is a projection of ${\rm Mat}_d(\C),$ thus generalizing the result from \cite{heltonklepmccullough}.
\end{remark}

\section{Examples}
Let \(Q \subseteq \R^{n+1}\) be the cone over the \(n\)-cube, i.e.\ the polyhedral cone 
defined by the \(2n\) inequalities $$ x_0 \pm x_i\geqslant 0,\quad i=1,\ldots n.$$ By \cite{fnt}, the maximal operator system over $Q$ is obtained by imposing the same inequalities at every level: For $a=(a_0,\ldots, a_n)\in{\rm Her}_s(\C)^{n+1}$ we have
$$a \in Q^{\rm max}_s \ \Leftrightarrow\ a_0\pm a_i\in  {\rm Mat}_{s}(\C)_+$$
for \(i=1,\ldots, n\). We now ask whether this system is $d$-minimal for some $d$. By \Cref{thm:subhom_OS} this is equivalent to its dual system being $d$-subhomogeneous. The dual system is the minimal system over the dual cone, see \cite{bergernetzer} for example. The dual cone of $Q$ is the cone over the cross-polytope, which we denote by $C$. It has $2^n$ facets and $2n$ extreme rays. For $n\leqslant 2$ the cones $Q$ and $C$ are isomorphic, but for $n\geqslant 3$ they are different.

Now  it turns out that $d$-minimality and $d$-subhomogeneity depend sharply on the dimension. Note that very similar arguments can also be found in  \cite{farenickmalekivarelasingla,krielmatrixconvex} in the context of matrix extreme points of free spectrahedra.

\begin{theorem} \label{thm:cube_d_min}
    Let \(Q\subseteq \R^{n+1}\) be the cone over the cube and $C\subseteq\R^{n+1}$ the cone over the cross-polytope.
    \begin{enumerate}
        \item[\((i)\)] For \(n=1\), \(Q^\mathrm{max}\) is $1$-minimal and $C^{\rm min}$ is $1$-subhomogeneous.
        \item[\((ii)\)] For \(n=2\), \(Q^\mathrm{max}\) is $2$-minimal and $C^{\rm min}$ is $2$-subhomogeneous.
        \item[\((iii)\)] For \(n \geqslant 3\), \(Q^\mathrm{max}\) is not \(d\)-minimal  and $C^{\rm min}$ is not $d$-subhomogeneous for any \(d \in \N.\)
    \end{enumerate}
\end{theorem}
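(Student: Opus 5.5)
By \Cref{thm:subhom_OS}, $(iii)\Leftrightarrow(v)$ applied to $S=C^{\rm min}$, together with $(C^{\rm min})^\vee=Q^{\rm max}$ and biduality, gives: $C^{\rm min}$ is $d$-subhomogeneous if and only if $Q^{\rm max}$ is $d$-minimal. So in each part it suffices to decide whether $Q^{\rm max}$ is $d$-minimal. By \Cref{def:dmin}, this means deciding whether every $a\in Q^{\rm max}_s$ is a finite sum $\sum_i v_i^*x_iv_i$ with $x_i\in Q^{\rm max}_d$.

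I would first normalize. Since $Q^{\rm max}$ is closed and $Q^{\rm max}_s$ contains the interior point $(I_s,0,\ldots,0)$, perturbing by $\varepsilon(I_s,0,\ldots,0)$ and using closedness of the $d$-minimal cones (finite-dimensional, Carathéodory plus compactness) lets me assume $a_0$ is invertible. Conjugating by $a_0^{-1/2}$ then gives $a_0=I_s$. The condition $a\in Q^{\rm max}_s$ now says exactly that $a_1,\ldots,a_n$ are hermitian contractions. Next I apply the Halmos dilation: each $a_i$ is the compression of a symmetry $U_i=\begin{pmatrix} a_i & (I-a_i^2)^{1/2}\\ (I-a_i^2)^{1/2} & -a_i\end{pmatrix}$ on $\C^{2s}$, using the same isometry $\C^s\hookrightarrow\C^{2s}$ for all $i$. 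Hence $a=V^*(I_{2s},U_1,\ldots,U_n)V$ with $V$ an isometry. It then suffices to show that $(I,U_1,\ldots,U_n)$ is a sum of compressions of level-$d$ points.

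For $(i)$ and $(ii)$, the $C^*$-algebra generated by one symmetry ($n=1$), respectively two symmetries ($n=2$), is finite-dimensional and $1$-, respectively $2$-subhomogeneous. For $n=2$ this follows from the two-projection theory: every irreducible representation of $\C\mathbb{Z}_2*\C\mathbb{Z}_2$ has dimension at most $2$. So after a unitary change of basis, $(I,U_1,\ldots,U_n)$ is a direct sum of blocks of size at most $d$ ($d=1$, respectively $2$). Each block has symmetric, hence contractive, entries and so lies in $Q^{\rm max}$ at its level; blocks of size $1$ can be padded to level $2$ by a direct sum with a point of $Q^{\rm max}_1$ and then compressed back. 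A direct sum is a sum of compressions to its blocks, and compressing by $V$ preserves this form. This gives $1$-minimality for $n=1$ and $2$-minimality for $n=2$.

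For $(iii)$ I would use a rigidity argument for unitary points. Fix $d$ and choose symmetries $U_1,U_2,U_3$ of size $s>d$ that generate ${\rm Mat}_s(\C)$, i.e.\ act irreducibly. Such triples exist for every $s$, since $\mathbb{Z}_2*\mathbb{Z}_2*\mathbb{Z}_2$ has irreducible representations by symmetries in every dimension; for instance, a generic third symmetry added to a pair in direct-sum $2\times2$ form should work. Set $U_i=0$ for $i>3$. Now suppose $(I,U_1,U_2,U_3,0,\ldots)=\sum_j v_j^*x_jv_j$ with $x_j\in Q^{\rm max}_d$. Collecting the terms gives $(I,U_i)=W^*(X_0,X_i)W$ with $X=\bigoplus_j x_j$. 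The main obstacle, which I would handle by the normalization argument from the proof of \Cref{lem:dmax}, is passing to a unital compression: split off the kernel of $X_0$, conjugate by $X_0^{1/2}$, and so reach $(I,U_i)=\tilde W^*(I,Y_i)\tilde W$ with $\tilde W$ an isometry and each $Y_i$ a direct sum of hermitian contractions of size at most $d$. Since $\tilde W^*Y_i\tilde W=U_i$ is unitary and $\|Y_i\|\leqslant1$, the range of $\tilde W$ is invariant under each $Y_i$ (Choi's multiplicative-domain argument). So $U\mapsto \tilde W^*Y\tilde W$ is a $*$-homomorphism from $C^*(Y_1,Y_2,Y_3)$ onto ${\rm Mat}_s(\C)$. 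But $C^*(Y_i)$ sits inside a direct sum of matrix algebras of size at most $d$ and is therefore $d$-subhomogeneous, and quotients of $d$-subhomogeneous algebras are $d$-subhomogeneous (\Cref{rem:dsubhom}). This contradicts $s>d$, so $Q^{\rm max}$ is not $d$-minimal for any $d$.
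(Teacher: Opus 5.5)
Your proof is correct. It follows the paper's route for $(i)$ and $(ii)$ in spirit but not in technique, and it genuinely departs from the paper in $(iii)$.

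For $(i)$ and $(ii)$, the paper stays at level $s$. It normalizes $a_0=I_s$ by splitting off $\ker a_0$, which lies in every $\ker a_i$ because $a_0\pm a_i\geqslant 0$. This spares the closedness argument you need after perturbing, although your argument is fine. It then replaces the contractions $a_1,a_2$ by symmetries using classical convex combinations, which the matrix conic hull allows, and applies Halmos' two-projection theorem. For $n=1$ it simply notes that $Q$ is a simplex cone. You reach tuples of symmetries instead by a single simultaneous Halmos dilation to $\C^{2s}$ through a common isometry. After that, the two-projection structure does the same job.

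In $(iii)$ the paper shows that $(I_s,2p_1-I_s,\ldots,2p_n-I_s)$ spans an extreme ray of $Q^{\max}_s$. Kernel inclusions force each extreme summand to have first entry a multiple of $I_s$, and projections are extreme among positive contractions. It then concludes by a rank count: each $v_j^*x_jv_j$ would have to be a multiple of $a$, yet its first entry has rank at most $d<s$.

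You argue instead by rigidity of unitary compressions. Once $\sum_j v_j^*x_jv_j$ is normalized to $\tilde W^*(I,Y_i)\tilde W$, with $\tilde W$ an isometry and the $Y_i$ block-diagonal contractions with blocks of size at most $d$, the $Y_i$ lie in the multiplicative domain. Hence ${\rm Mat}_s(\C)$ becomes a quotient of a $d$-subhomogeneous $C^*$-algebra. This is essentially a boundary-representation argument: tuples of symmetries admit no nontrivial dilations inside $Q^{\max}$. It is more conceptual, needs only three symmetries, and bypasses Krein--Milman as well as the step the paper calls straightforward. The paper's argument is purely convex-geometric and additionally shows that the point spans an extreme ray of $Q^{\max}_s$.

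Both proofs need irreducible triples of symmetries in every dimension. The paper cites Davis here, whereas your ``generic third symmetry should work'' is only a heuristic. A clean justification: the symmetric group on $s+1$ letters is generated by three involutions, namely the transposition $(1\,2)$ and two reflections of the regular $(s+1)$-gon whose product is the full cycle. Its standard representation of dimension $s$ is irreducible and sends involutions to symmetries. By Burnside's theorem, these three symmetries generate ${\rm Mat}_s(\C)$.
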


\begin{proof}
    For $n=1$, \(Q\) is a simplex cone, and thus minimal and maximal systems coincide (see for example \cite{fnt}). For \(n=2\) we need to show that $Q_2^{\max}$ generates all higher levels of $Q^{\max}$, as explained in \Cref{def:dmin}. So let \(s \geqslant 3\) and  take \(a \in Q^{\rm max}_s\). Since $a_0\pm a_i\geqslant 0$ holds for $i=1,2,$ we must have $a_0\geqslant 0.$ Further, any congruence transformation that splits off a zero block in $a_0$ simultaneously splits off the same zero block in $a_1$ and $a_2$. After potentially decreasing $s$ we can thus assume $a_0=I_s$. The inequalities now state that the eigenvalues of $a_1,a_2$ are in $[-1,1].$ Since  classical convex combinations are allowed in the construction from \Cref{def:dmin}, we can even assume that these eigenvalues are all $\pm 1$, and thus $$a_i=2p_i-I_s$$ for some orthogonal projections $p_1,p_2.$ By Halmos' two projections theorem, $p_1,p_2$ (and thus also $a_1,a_2$) can be decomposed simultaneously into blocks of size at most $2$, by a unitary conjugation. Since block-sums are allowed in the construction from \Cref{def:dmin}, we are done.

    For \(n\geqslant 3\), fix \(s\in\N\)  and choose orthogonal projections \(p_1, \dots, p_n\) which generate \({\rm Mat}_s(\C)\) as an algebra; this is possible by \cite{davisgenerators}. Set \(a_i=2p_i-I_s\) and consider $$a=(I_s, a_1, \dots, a_n)\in Q_s^{\max}.$$  We will show that $a$ does not arise from lower levels of $Q^{\max}$ as in \Cref{def:dmin}. But we first show that $a$ lies on a classical extreme ray of $Q_s^{\max}.$ 
    
    By the Krein-Milman theorem we can write \(a = \sum_j b^{(j)}\) for \(b^{(j)}\) on extreme rays of $Q_s^{\max}$. We thus have \[\sum_j (b_0^{(j)} \pm b_i^{(j)}) = I_s \pm a_i\] for each \(i=1,\ldots n\), and by the inequalities defining $Q^{\max}$ each term on the left-hand side is positive semidefinite. Hence, we get $$\ker(I_s \pm a_i) \subseteq \ker(b_0^{(j)} \pm b_i^{(j)})$$ for all \(i=1,\ldots,n\). 
    
    This implies that each  \(b_0^{(j)}\) is proportional to the identity: taking \(v \in \ker(p_i)=\ker(I_s + a_i) \) and \(w \in \ker(p_i)^\perp=\ker(I_s - p_i)=\ker(I_s - a_i)\), we get \[w^* (b_0^{(j)} + b_i^{(j)}) v = 0 = v^* (b_0^{(j)} - b_i^{(j)}) w.\]  Taking the hermitian conjugate of the second term and adding both yields \(w^* b_0^{(j)} v = 0\). This implies that $\ker(p_i)$ is an invariant subspace of \(b_0^{(j)}\), and thus $p_i$ and $b_0^{(j)}$ commute. But since the \(p_i\) generate \({\rm Mat}_s(\C)\),  \(b_0^{(j)}\) is indeed proportional to $I_s,$ say $b_0^{(j)}=\alpha_jI_s.$ 
    
    Since $b^{(j)}$ is classically extreme in $Q_s^{\max}$, the matrices  \(\alpha_j^{-1} b^{(j)}_i\) for $i=1,\ldots, n$ have eigenvalues \({\pm 1}\), so we write them as $$\alpha_j^{-1} b^{(j)}_i=2q_i^{(j)} - I_s$$ for projections \(q_i^{(j)}\). Looking at the sum above,  we get \(1 = \sum_j \alpha_j\) and \(p_i = \sum_j \alpha_j q_i^{(j)}\) for each \(i=1,\ldots, n\). Now it is straightforward to show that this can only be true if all $q_i^{(j)}$ equal $p_i$. We have thus proven that $a$ is indeed extreme in $Q_s^{\max}$.

    Now finally assume that $a$ arises from a lower level $d<s$ as in \Cref{def:dmin}:
    $$a=\sum_j v_j^* x_jv_j$$ with $x_j\in Q_d^{\max}.$
    By extremality of $a$, each term $v_j^*x_jv_j$ is already proportional to $a$. But the first entry of $a$ is $I_s$, the first entry of $v_j^*x_jv_j$ can have at most rank $d$. This is a contradiction. So $Q^{\max}$ is indeed not $d$-minimal.
\end{proof}

\begin{example}
\label{ex:sep}
    Let \(P={\rm Mat}_2(\C)_+\). Then the minimal operator system \(P^{\min}\) is not \(3\)-subhomogeneous.
    Indeed, at level \(s\), the cone \(P_s^{\min}\) is the cone of separable positive semidefinite elements in \({\rm Mat}_2(\C)\otimes {\rm Mat}_s(\C)\). If \(P^{\min}\) were \(3\)-subhomogeneous, then it would be \(3\)-maximal by \Cref{thm:subhom_OS}. Hence, by \Cref{lem:dmax}, every entangled state in \({\rm Mat}_2(\C)\otimes {\rm Mat}_s(\C)\) would have an entangled compression of the second tensor factor to dimension \(3\).

    However, there exist PPT entangled states in \({\rm Mat}_2(\C)\otimes {\rm Mat}_4(\C)\); see \cite{horodecki1997}. Let \(\sigma\) be such a state. For every isometry \(v\in{\rm Mat}_{4,3}(\C)\), the compression
    \[
        (I_2\otimes v)^*\sigma(I_2\otimes v)
    \]
    is again PPT, because partial transposition on the first tensor factor commutes with compression on the second tensor factor. By the Peres--Horodecki criterion, PPT is equivalent to separability in dimensions \(2\times 3\); see \cite{horodecki1996}. Thus every such \(3\)-dimensional compression of \(\sigma\) is separable. This contradicts the compression criterion above, and therefore \(P^{\min}\) is not \(3\)-subhomogeneous.
\end{example}

\begin{remark}
    Clearly, $d$-subhomogeneity passes from extensions to subsystems and $d$-minimality passes to quotients/linear images. The other directions fail in general. For example, \cite{Drescher} provides a $8$-dimensional concrete operator system $S\subseteq{\rm Mat}_3(\C)$ which is not $d$-minimal for any $d.$ Note that ${\rm Mat}_3(\C)$ is $3$-minimal. The dual $S^\vee$ is thus not $d$-subhomogeneous for any $d$, although it is a quotient of ${\rm Mat}_3(\C),$ which is $3$-subhomogeneous.
\end{remark}

\section*{Some Open Questions}

We close with several open questions suggested by the results above.

Let $C\subseteq\R^{n+1}$ be a polyhedral cone. What can be said, in general, about subhomogeneity of $C^{\min}$? For simplex cones the question is trivial, while the first nontrivial case in dimension $n=2$ is $2$-subhomogeneous. Is this true for every polyhedral cone in $\R^3$? Does the answer depend on the number and arrangement of its extreme rays?

In dimensions $n\geqslant 3$, our examples provide counterexamples to subhomogeneity. These examples have $2n$ extreme rays, whereas simplex cones have only $n+1$ extreme rays. It is therefore natural to ask what happens for polyhedral cones whose number of extreme rays lies between these two cases.

One may also ask analogous questions for minimal systems over non-polyhedral cones. A particularly interesting case, because of its connection with quantum information, is the cone of positive semidefinite matrices $P={\rm Mat}_n(\C)_+$, see also \Cref{ex:sep}. For the minimal system over $P$ one has
\[
P_s^{\min}={\rm Mat}_n(\C)_+\otimes_{\min}{\rm Mat}_s(\C)_+,
\]
the cone of mixed separable states. Thus $d$-subhomogeneity would mean that entanglement can always be detected after compressing the second tensor factor to dimension $d$. The preceding example shows that for $n=2$ one cannot take $d=3$, but it does not rule out the possibility of some larger $d$.

Equivalently, one can ask the following question:

Given $n\geqslant 1$, does there exist $d\geqslant 1$ such that, for every $s\geqslant 1$ and every mixed entangled state $\sigma\in{\rm Mat}_n(\C)\otimes{\rm Mat}_s$, some compression of the second tensor factor to ${\rm Mat}_d(\C)$ remains entangled?

\section*{Acknowledgments} 
The authors thank Andreas Thom for illuminating discussions on subhomogeneous $C^*$-algebras.

This research was funded in part by the Austrian Science Fund (FWF) [10.55776/P36684]. For open access purposes, the authors have applied a CC BY public copyright license to any author-accepted manuscript version arising from this submission. 

\newpage\thispagestyle{empty}
\addcontentsline{toc}{part}{Bibliography} 
{\linespread{1}\bibliographystyle{dpbib} \bibliography{references}}

\end{document}